\newcommand{\tensor}{\mbox{$\otimes$}}
\newcommand{\pa}{\bindnasrepma}
\newcommand*{\oldneg}{\mathord{\sim}}
\newcommand{\preq}{\preccurlyeq}
\newtheorem{defn}{Definition}
\newtheorem{thm}{Theorem}
\newtheorem{prop}{Proposition}
\newtheorem{cor}{Corollary}
\newtheorem{rem}{Remark}
\title{Three Topics in Non-decomposability of Generalized Multiplicative Connectives}
\author{Yuki Nishimuta}
\date{}
\begin{document}
\maketitle

\begin{abstract}
Danos and Regnier introduced generalized (non-binary) multiplicative connectives in Danos and Regnier \cite{DR}. They showed that there exist generalized multiplicative connectives that cannot be defined by any combination of the tensor and par rules in the multiplicative fragment of linear logic. Such connectives are called non-decomposable generalized multiplicative connectives \cite[p.192]{DR}.

The non-decomposability of logical connectives can be regarded as a proof-theoretic and syntactic counterpart of functional completeness for cut-free proofs. In this short note, we investigate Danos and Regnier's notion of non-decomposability and present three results concerning the (non-)decomposability of generalized multiplicative connectives.
\end{abstract}

\section{Introduction}   
Danos and Regnier introduced generalized (non-binary) multiplicative connectives in \cite{DR}. They showed that there exist generalized multiplicative connectives that cannot be defined by any combination of the tensor and par rules in the multiplicative fragment of linear logic satisfying the main reduction step of the cut-elimination theorem. Such connectives are called non-decomposable generalized multiplicative connectives.

The non-decomposability of logical connectives can be regarded as a proof-theoretic and syntactic counterpart of functional completeness for cut-free proofs. Danos and Regnier defined generalized multiplicative connectives only in the classical multiplicative fragment of  linear logic.

In this short note, we investigate the (non-)decomposability of generalized multiplicative connectives in other fragments of linear logic and present three results on (non-)decomposability.

In the first part of this note, we define intuitionistic generalized multiplicative connectives by employing polarities, inspired by \cite{Andreoli}. We show that all intuitionistic generalized multiplicative connectives in intuitionistic multiplicative linear logic $\mathsf{IMLL}$ are decomposable. Hence, there are no non-decomposable multiplicative connectives in $\mathsf{IMLL}$.

Secondly, we generalize Danos and Regnier’s notion of (non-)decomposability and show that all non-decomposable generalized multiplicative connectives become decomposable in multiplicative additive linear logic $\mathsf{MALL}$ and multiplicative exponential linear logic $\mathsf{MELL}$.

Finally, we show that elementary multiplicative linear logic $\mathsf{EMLL}$ preserves the non-decomposability of generalized multiplicative connectives.


  \section{Decomposability of intuitionistic generalized connectives} 
In this section, we define intuitionistic generalized connectives by modifying the definition of (classical) generalized multiplicative connectives. The main result of this section is that all intuitionistic generalized connectives in $\mathsf{IMLL}$ are decomposable. This implies that there are no non-decomposable generalized connectives in $\mathsf{IMLL}$. We assume familiarity with generalized multiplicative connectives (see \cite{DR, Nishimuta1}).

In what follows, we define intuitionistic generalized connectives.

\medskip

First, we define polarized partitions. Then, we introduce intuitionistic polarized partitions and intuitionistic polarized meeting graphs.

\begin{defn}\label{polarized partition}
We call an occurrence of an index $s$ positive if it is unchecked, and negative if it is checked. A partition of the set $\{1,\dots,n\}$ is a polarized partition if each occurrence of an index carries one of these polarities.

\end{defn}

In $\mathsf{IMLL}$, a partition set $P_{\mathcal{C}}$ of a generalized connective $\mathcal{C}$ corresponds to  one-sided sequents.  A formula $A_n$ appearing on the right-hand side of a sequent corresponds to the checked number $\check{n}$. Hence, a sequent of $\mathsf{IMLL}$ $A_1,\dots,A_{n-1}, \oldneg A_n\vdash$ corresponds to the partition  $\{1,\dots, n-1, \check{n}\}$. When an atomic formula (a metavariable) $A$ has positive (resp. negative) polarity, we often denote $A$ as $A^{+}$ (resp. $A^{-}$).

\begin{defn}\label{intui partition}
A polarized partition $p$ is an intuitionistic polarized partition if each class of $p$ contains at most one checked element.

\end{defn}

\medskip

\par An intuitionistic polarized meeting graph $\mathcal{G}(p,q)$ is a polarized meeting graph obtained from two intuitionistic polarized partitions $p$ and $q$ as follows: we draw an edge between two nodes $N\in p$ and $N'\in q$ if there exists an index $s$ such that $s\in N$ and $\check{s}\in N'$ (or $\check{s}\in N$ and $s\in N'$). Thus each index $s$ induces at most one edge, namely between the unique classes containing $s$ and $\check{s}$.






\begin{defn}
An  $n$-ary intuitionistic generalized connective $\mathcal{C}$ is a pair of finite sets of intuitionistic polarized partitions of $\{1,\dots,n\}$, $(P_L,P_R)$ such that $(P_R)^{\bot}=P_L$ and $(P_L)^{\bot}=P_R$ hold.
\end{defn}

The definition of (non-)decomposability for intuitionistic generalized multiplicative connectives is analogous to that for (classical) generalized multiplicative connectives.

\bigskip

The general form of a right introduction rule for (two-sided) intuitionistic generalized multiplicative connectives is as follows:

\begin{center}
\def\fCenter{\ \vdash\ }
\Axiom$\Gamma_1, A_{11},\dots, A_{1m_{1}-1}\fCenter A_{1m_{1}} \ \ \cdots \ \ \Gamma_k, A_{k1},\dots, A_{km_{k}-1}\fCenter A_{km_{k}}$
\UnaryInf$\Gamma \fCenter\mathcal{C}(A_{11},\dots,A_{km_k}) $
\DisplayProof

\end{center}
where $\Gamma=\bigcup^{k}_{i=1}\Gamma_i$ and $j_{m_j}\in\{1,\dots,n\}$.

\medskip

We now show that all intuitionistic generalized multiplicative connectives are decomposable.

\begin{thm}\label{IMLL decomp}
Let $\mathcal{C}$ be an arbitrary intuitionistic multiplicative generalized connective. Then, $\mathcal{C}$ is $\mathsf{IMLL}$-decomposable .
\end{thm}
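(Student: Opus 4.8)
The plan is to prove the theorem by induction on the arity $n$ of $\mathcal{C}$, showing at each step that $\mathcal{C}$ has a \emph{top-level} binary connective (either $\otimes$ or $\multimap$) so that it factors as $\mathcal{C} = \mathcal{C}_1 \star \mathcal{C}_2$ for two intuitionistic connectives of strictly smaller arity, to which the induction hypothesis applies. The base cases $n \le 2$ are immediate, since the only intuitionistic connectives on one or two positions are the atom, $\otimes$, and $\multimap$, all of which are $\mathsf{IMLL}$-decomposable by definition.

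For the inductive step the crucial ingredient is a splitting lemma: every intuitionistic connective of arity $n \ge 2$ admits a nontrivial bipartition $\{1, \ldots, n\} = I \sqcup J$ of its positions that is uniformly respected by the right-partition set $P_R$ (and, dually, by $P_L$), in the sense that either every class of every partition in $P_R$ lies entirely inside $I$ or entirely inside $J$ (the $\otimes$ case), or $I$ and $J$ are merged in the appropriate dual sense (the $\multimap$ case, recalling $A \multimap B = \oldneg A \bindnasrepma B$). The point of isolating this lemma is that, once such a bipartition is found, the restrictions of $P_R$ and $P_L$ to $I$ and to $J$ are again orthogonal intuitionistic partition sets, hence determine intuitionistic connectives $\mathcal{C}_1$ and $\mathcal{C}_2$ whose $\star$-composition is $\mathcal{C}$; decomposability of $\mathcal{C}$ then follows from that of $\mathcal{C}_1$ and $\mathcal{C}_2$.

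The work is in establishing the splitting lemma, and this is exactly where the intuitionistic hypothesis — that every class of every partition contains at most one checked element — does the heavy lifting. In the classical setting the analogous lemma is false: there are \emph{prime} (non-splittable) connectives, e.g.\ the arity-$4$ connective built from the three perfect matchings of $\{1,2,3,4\}$, and these are precisely the non-decomposable ones. My plan is to show that no such prime configuration survives once polarities are present: if two positions $i,j$ were both checked, then any partition of $P_R$ placing them in a common class would already violate the single-checked-element condition, so tracking this across $P_R$ and its dual $P_L$, together with the orthogonality condition (each meeting graph $\mathcal{G}(p,q)$ being a tree, i.e.\ connected and acyclic), forces the block-incidence structure to be well-nested rather than entangled. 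Concretely, I would fix a partition of $P_R$, choose a block together with its unique checked element as a candidate outermost component, and use acyclicity of the meeting graphs against every partition of $P_L$ to separate that component as the required $I$.

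The main obstacle I anticipate is verifying that the candidate bipartition is respected by \emph{all} partitions of $P_R$ and $P_L$ simultaneously, not merely by the one used to extract it: this is a global coherence property, and it is here that the tree condition and the at-most-one-checked-element restriction must be combined with care. A cleaner alternative worth pursuing in parallel is to reduce to the known classical characterization of decomposability in \cite{Nishimuta1}: forget the polarities to obtain an underlying classical connective, use the polarity constraint to verify that it meets the classical (non-prime) decomposability criterion, and then lift the resulting $\otimes/\bindnasrepma$ decomposition back to an $\otimes/\multimap$ decomposition, checking that the fixed polarity assignment makes this lift well-defined.
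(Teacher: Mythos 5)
Your proposal is not yet a proof but a plan, and the plan's entire weight rests on a splitting lemma that you state, motivate, and then leave unproven. That lemma --- every intuitionistic connective of arity $n\ge 2$ admits a nontrivial bipartition of its positions respected uniformly by \emph{every} partition in $P_R$ and $P_L$ --- is, once combined with your induction, essentially equivalent to the theorem itself: if $\mathcal{C}$ decomposes as a formula $\alpha$, the outermost connective of $\alpha$ supplies exactly such a bipartition, and conversely. So all of the difficulty has been relocated into the lemma, and the issue you yourself flag as ``the main obstacle'' (that a bipartition extracted from one partition of $P_R$ must be respected by all partitions of $P_R$ and $P_L$ simultaneously) is precisely the global coherence property a complete proof must establish and that your sketch does not. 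Your fallback route via the classical characterization has the same problem: verifying that the underlying classical connective ``meets the classical decomposability criterion'' after forgetting polarities is again the whole content of the theorem, not a reduction of it.

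For comparison, the paper takes a more direct route that needs no splitting lemma and no induction on arity. It argues by cases on the number of right introduction rules. With a single right rule, the decomposing formula is read off immediately (left $\otimes$ inside each class, then $\multimap$, then $\otimes$ across the resulting formulas). With several right rules, the key observation is a polarity-rigidity property: the polarity of a given position cannot differ between two right rules, since otherwise the meeting graph with the dual left rule is cyclic and cut elimination fails. Hence every right rule determines the \emph{same} normal formula $\alpha$, and an orthogonality argument using $(P_L)^{\bot}=P_R$ forces $P_{\alpha}=P_R$. If you want to salvage your approach, this polarity-rigidity observation is very likely the missing ingredient for your splitting lemma, since it is exactly what collapses the combinatorial freedom that makes classical prime (non-splittable) connectives possible.
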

 
 \begin{proof}
  Let $L_1, \dots,L_k$ be an enumeration of left introduction rules of  $\mathcal{C}$ and $R_1,\dots,$
  \noindent$R_m$ be that of right introduction rules. The main reduction step of the \seqsplit{cut-elimination} for a $\mathcal{C}$-connective holds if and only if the two intuitionistic partition sets of $\mathcal{C}$, $(P_L, P_R)$ are orthogonal. This equivalence is proved in \cite[Lemma 2]{DR}. By definition of $\mathcal{C}$,  $\mathcal{C}$ satisfies the main reduction step of the cut-elimination. We prove this theorem by case analysis on the number and the form of right introduction rules.
\par\noindent Case $\#P_R=1$: $\mathcal{C}$ is decomposed as follows. For each class $x_i$ ($i=1,\dots, k$), we apply the left $\tensor$-rules as much as possible. We use $\multimap$ and obtain a formula $\alpha_i$. We connect these formulas using the tensor rules $\alpha=\alpha_1\tensor\dots\tensor\alpha_k$. In the following, we call this form of formulae normal. By construction, $P_{\mathcal{C}}=P_{\alpha}$ holds. Hence, $\mathcal{C}$ is decomposable.
\par\noindent Case $\#P_R>1$: Consider two arbitrary right rules of $\mathcal{C}$ (say, $R_1, R_2\in\{R_1, \dots, $
\noindent $R_k\}$).

\par\noindent Subcase (i): For some formula $A$, the polarity of $A$ changes  between $R_1$ and $R_2$ (e.g. $A^{+}\in R_1$ and $A^{-}\in R_2$). In this case, cut elimination fails. For example, let $R_1=\{(A, B \vdash E) \ \  (C, D\vdash F)\}$ and $R_2=\{(F, B\vdash E) \ \ (C, D\vdash A)\}$. The dual of $R_1$ should be $L_1=\{(F\vdash) (\vdash A)\cdots\}$, where we omitted  irrelevant formulas. The meeting graph between $R_2$ and $L_1$ is cyclic  and the cut-elimination does not  hold. 
\par\noindent Subcase (ii): For all formulas $A_i$,  a polarity of $A_i$ does not change in $R_k$. In this case,  if we apply the right $\tensor$-rules as much as possible, we obtain one sequent. After that, we apply the left $\tensor$-rules as much as possible and then apply one right $\multimap$-rule. By this construction, we can obtain the same formula $\alpha$ from $R_k$ for each $k$.  We show that $\alpha$ is the decomposition of $\mathcal{C}$. We assume that $\#P_{\alpha}>\#P_R$ and show a contradiction. Consider arbitrary two elements $p\in(P_{\alpha}\setminus P_R)$ and $q\in P_{\oldneg\alpha}$. By the relation $P_{\alpha}\supset P_R$, $P_L\supset P_{\oldneg\alpha}$ holds. Hence $q\in P_L$ holds. It contradicts  the definition of an intuitionistic generalized connective $(P_L)^{\bot}=P_R$ because $p\bot q$ and $p\notin P_R$ hold. Therefore,  $\#P_{\alpha}=\#P_R$ holds.
\end{proof}

\begin{cor}
There are no non-decomposable multiplicative connectives in $\mathsf{IMLL}$.
\end{cor}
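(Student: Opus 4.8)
The plan is to read off the corollary directly from Theorem~\ref{IMLL decomp}, since the corollary is nothing more than a restatement of that theorem's universal conclusion. By definition a generalized connective is \emph{non-decomposable} in a given system exactly when it fails to be decomposable in that system; so the non-decomposable multiplicative connectives in $\mathsf{IMLL}$ are precisely those intuitionistic multiplicative generalized connectives that are not $\mathsf{IMLL}$-decomposable.

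First I would recall the exact scope of Theorem~\ref{IMLL decomp}: its hypothesis is that $\mathcal{C}$ is an \emph{arbitrary} intuitionistic multiplicative generalized connective, and its conclusion is that $\mathcal{C}$ is $\mathsf{IMLL}$-decomposable. Because the universal quantifier ranges over the whole class of such connectives, no member of that class can be non-decomposable. The corollary is then immediate: the set of non-decomposable connectives in $\mathsf{IMLL}$ is empty, being the complement within that class of a set equal to the whole class.

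The single point I would treat with (mild) care, and which I regard as the only obstacle, is a bookkeeping check that the phrase ``multiplicative connective in $\mathsf{IMLL}$'' in the corollary denotes exactly the objects the theorem quantifies over, namely the intuitionistic multiplicative generalized connectives defined above, so that the theorem's quantifier really does cover every candidate for non-decomposability. Granting this identification of the two classes, which is read off directly from the definitions already collected, there is no residual argument and the corollary follows at once.
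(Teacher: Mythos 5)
Your proposal is correct and matches the paper's intent exactly: the corollary is stated without a separate proof precisely because it is the immediate contrapositive reading of Theorem~\ref{IMLL decomp}, whose universal quantifier over intuitionistic multiplicative generalized connectives leaves no room for a non-decomposable one. Your careful note about identifying the corollary's class of connectives with the theorem's is the only substantive check, and it is the right one.
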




 \section{Decomposability of generalized multiplicative connectives in $\mathsf{MALL}$ and $\mathsf{MELL}$}
 
 Danos and Regnier defined the notion of (non-)decomposability of generalized connectives only in the \emph{multiplicative} framework \cite{DR}.  

In order to investigate whether non-decomposability is preserved in other logical systems, we extend Danos and Regnier’s definition of non-decomposability. We then show that multiplicative additive linear logic $\mathsf{MALL}$ and multiplicative exponential linear logic $\mathsf{MELL}$ do \emph{not} preserve non-decomposability.

  \bigskip

{\bf Notation and terminology}; The letter $\mathbb{L}$ represents some logical system containing $\mathsf{MALL}$ as a subsystem.  We call the formulas of the form  $(A_{11}\pa\dots\pa A_{1i_1})$

\noindent $\tensor\dots\tensor(A_{m1}\pa\dots\pa A_{mi_m})$ normal (where $m \ge 1$ and $i_j \ge 0$).

\bigskip

We define the order relation $\preq_{\mathbb{L}}$ as follows; for any formulas $P, Q\in\mathbb{L}$, $P\preq_{\mathbb{L}} Q$ if $Q$ is derivable from $P$ in the one-sided sequent calculus $\mathbb{L}$ using the rules of  $\mathbb{L}$.  We  omit the subscript $\mathbb{L}$ for readability.
\begin{defn}\label{L-decomp}
Let $I_1,\dots,I_k$ ($k\in\mathbb{N}$) be the introduction rules of a generalized
multiplicative connective $\mathcal{C}$, and let $S_{i1},\dots,S_{im_i}$ ($i=1,\dots,k$)
be the premises of $I_i$.
A generalized connective $\mathcal{C}$ is said to be \emph{$\mathbb{L}$-decomposable}
if one of the following holds:
\begin{enumerate}
\item there exists a formula $\alpha\in\mathsf{MLL}$ such that
      $P_{\mathcal{C}}=P_{\alpha}$ holds using only inference rules of $\mathsf{MLL}$;
\item there exists a formula $\alpha\in\mathbb{L}$ such that
  \begin{enumerate}
  \item[(i)] for all $i$ ($1\le i\le k$), $\vdash \alpha$ is derivable from
        $S_{i1},\dots,S_{im_i}$ using at least one inference rule of $\mathbb{L}$
        not belonging to $\mathsf{MLL}$;
  \item[(ii)] each $S_{i1},\dots,S_{im_i}$ is obtainable from $\vdash \alpha$
        by bottom-up construction using at least one inference rule of $\mathbb{L}$
        not belonging to $\mathsf{MLL}$;
  \item[(iii)] $\alpha$ is minimal with respect to $\preq$ among all formulas
        satisfying (i) and (ii), that is, for any $\beta$ satisfying (i) and (ii),
        we have $\alpha \preq \beta$.
  \end{enumerate}
\end{enumerate}
\end{defn}

\begin{rem}
The minimality condition (iii) excludes trivial decompositions obtained by adding superfluous connectives and ensures that decomposability reflects a genuine increase in expressive power of the logical system.
For instance, if $\alpha$ satisfies Definition~\ref{L-decomp}(2), then the formula $\alpha \oplus \alpha$ also satisfies conditions (i) and (ii), but it carries no additional information about the partition set; the minimality requirement rules out such redundant decompositions.
\end{rem}

We show that all of generalized  multiplicative connectives are decomposable in $\mathsf{MALL}$ and $\mathsf{MELL}$.

\begin{prop}\label{additive decomp}
Let $\mathcal{C}$ be an arbitrary $\mathsf{MLL}$-non-decomposable generalized connective. $\mathcal{C}$ is $\mathsf{MALL}$-decomposable.
\end{prop}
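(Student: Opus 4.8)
The plan is to glue the several introduction rules of $\mathcal{C}$ together with the additive disjunction $\oplus$ and then to verify clause 2 of Definition \ref{L-decomp}; I make no attempt to realize $P_{\mathcal{C}}$ by a single $\mathsf{MLL}$ formula, since that is exactly what $\mathsf{MLL}$-non-decomposability forbids. Write $P_{\mathcal{C}}=\{p_1,\dots,p_k\}$ and let $I_1,\dots,I_k$ be the corresponding introduction rules, where $I_i$ is associated with $p_i=\{c_{i1},\dots,c_{im_i}\}$ and has premises $S_{i1},\dots,S_{im_i}$, the premise $S_{ij}$ collecting the ports $(A_l)_{l\in c_{ij}}$ together with its share $\Gamma_{ij}$ of the context. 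First I would record that $k\geq 2$: a connective given by a single partition is realized by its normal form, hence is $\mathsf{MLL}$-decomposable, so non-decomposability forces at least two rules and guarantees that the formula built below genuinely contains an additive rule.

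For each $i$ I would take the normal formula
\[
\alpha_i=\bigl(\pa_{l\in c_{i1}}A_l\bigr)\tensor\cdots\tensor\bigl(\pa_{l\in c_{im_i}}A_l\bigr)
\]
and use the fact that a tensor-of-pars realizes exactly its own partition, so that $P_{\alpha_i}=\{p_i\}$, there being no distribution ambiguity as every port lies inside its own par-block. I then set
\[
\alpha=\alpha_1\oplus\alpha_2\oplus\cdots\oplus\alpha_k,
\]
a $\mathsf{MALL}$ formula, and claim that $\alpha$ witnesses clause 2 for $\mathcal{C}$.

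For clause 2(1), fix $i$ and start from $S_{i1},\dots,S_{im_i}$: the $\pa$-rule turns each $S_{ij}$ into $\vdash\Gamma_{ij},\pa_{l\in c_{ij}}A_l$, the $\tensor$-rules assemble these into $\vdash\Gamma,\alpha_i$ with $\Gamma=\bigcup_j\Gamma_{ij}$ (reproducing the context split of $I_i$), and $\oplus$-introduction yields $\vdash\Gamma,\alpha$; this uses the $\oplus$-rule, which is not an $\mathsf{MLL}$ rule, as required. For clause 2(2), I would run the same steps bottom-up from $\vdash\Gamma,\alpha$: the $\oplus$-rule selects the $i$-th disjunct to reach $\vdash\Gamma,\alpha_i$, the top-level $\tensor$'s split the context, and the invertible $\pa$-rule opens each par-block, recovering exactly $S_{i1},\dots,S_{im_i}$. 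Carrying this out for every $i$ establishes $\mathsf{MALL}$-decomposability.

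The derivations themselves are routine; the step that deserves care is the correspondence between the disjunctive choice among the rules $I_1,\dots,I_k$ of $\mathcal{C}$ and the additive choice offered by $\oplus$, namely that selecting the rule $I_i$ matches selecting the disjunct $\alpha_i$ and that the tensor-forced context distribution agrees with $\Gamma=\bigcup_j\Gamma_{ij}$. I expect the genuine point to be conceptual rather than computational: clause 2 of Definition \ref{L-decomp} asks only for the bidirectional derivability of each rule of $\mathcal{C}$ and not for $P_\alpha=P_{\mathcal{C}}$, and it is precisely this weakening of the notion of decomposition that lets the $\oplus$-construction succeed on a connective that is, by hypothesis, $\mathsf{MLL}$-non-decomposable.
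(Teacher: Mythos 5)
Your proposal is correct and follows essentially the same route as the paper: for each partition $p_i$ you form the unique normal (tensor-of-pars) formula $\alpha_i$ with $P_{\alpha_i}=\{p_i\}$, glue them as $\alpha=\alpha_1\oplus\dots\oplus\alpha_k$, and check both directions of clause 2 of Definition \ref{L-decomp} via the $\oplus$-rule. The paper's proof is terser but identical in substance; your added remarks (that non-decomposability forces $k\geq 2$, and that the definition demands only bidirectional derivability rather than $P_\alpha=P_{\mathcal{C}}$) are accurate elaborations of what the paper leaves implicit.
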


\begin{proof}
Let $P_{\mathcal{C}}=\{p_1,\dots,p_s\}$ ($s\in\mathbb{N}$) be the partition set of  $\mathcal{C}(A_1,\dots,A_m)$. For each partition $p_j$, the unique (up to commutativity and associativity) normal formula $\alpha_j$ ($j\in\{1,\dots, s\}$) having the same assumptions as $p_j$ is uniquely determined. Put $\alpha=\alpha_1\oplus\dots\oplus\alpha_s$. $\vdash \alpha_1\oplus\dots\oplus\alpha_s$ is derivable from $p_j$ ($j=1,\dots, s$). For each $j$, $p_j$ is obtainable by bottom-up construction. 
Moreover, $\alpha$ is minimal with respect to $\preq$ among all formulas satisfying Definition~\ref{L-decomp}(2). Indeed, for each $j$ the premises corresponding to $p_j$ derive $\vdash \alpha_j$, and hence any $\beta$ satisfying (i) must satisfy $\alpha_j \preq \beta$ for all $j$. Therefore, by the $\oplus$-rule, $\alpha_1\oplus\cdots\oplus\alpha_s \preq \beta$.
\end{proof}

Figure~\ref{MALL decomposition} is an example of decomposition for the Danos and Regnier's non-decomposable connective in $\mathsf{MALL}$.

\begin{figure}[h]
\begin{framed}
\begin{center}

\medskip

\def\fCenter{\ \vdash\ }
\Axiom$\fCenter A, B, \Gamma$
\UnaryInf$\fCenter A\pa B, \Gamma$
\Axiom$\fCenter C, D, \Delta$
\UnaryInf$\fCenter C\pa D, \Delta$
\BinaryInf$ \fCenter(A\pa B)\tensor(C\pa D), \Gamma, \Delta$
\UnaryInf$\fCenter ((A\pa B)\tensor(C\pa D))\oplus ((A\pa C)\tensor(B\pa D)), \Gamma, \Delta$
\DisplayProof

\bigskip

\def\fCenter{\ \vdash\ }
\Axiom$\fCenter A, C, \Gamma$
\UnaryInf$\fCenter A\pa C, \Gamma$
\Axiom$\fCenter B, D, \Delta$
\UnaryInf$\fCenter B\pa D, \Delta$
\BinaryInf$ \fCenter(A\pa C)\tensor(B\pa D), \Gamma, \Delta$
\UnaryInf$\fCenter ((A\pa B)\tensor(C\pa D))\oplus((A\pa C)\tensor(B\pa D)), \Gamma, \Delta$
\DisplayProof

\end{center}
\caption{Decomposition of a generalized connective in $\mathsf{MALL}$}\label{MALL decomposition}
\end{framed}
\end{figure}

\begin{rem}
When we restrict attention to cut-free proofs, a generalized connective $\mathcal{C}(A_1, \dots, A_m)$ can be identified with the formula $\alpha_1 \oplus \dots \oplus \alpha_s$ constructed above.  
However, this identification fails in the presence of cuts. Indeed, the dual connective $\mathcal{C}^{\ast}(\oldneg A_1, \dots, \oldneg A_m)$ can be decomposed as $\beta_1 \oplus \dots \oplus \beta_s$ by the same method, but the formulas $\alpha_1 \oplus \dots \oplus \alpha_s$ and $\beta_1 \oplus \dots \oplus \beta_s$ are not De Morgan duals. Consequently, the cut rule is not defined between these formulas (see Figure~\ref{fig:undefined-cut}).
\end{rem}


If we consider only cut-free proofs, the introduction of non-decomposable connectives amounts to the introduction of additive disjunction in the restricted form. 

\begin{figure}[h]
\begin{framed}
\begin{center}
\scalebox{0.6}{
\def\fCenter{\ \vdash\ }
\Axiom$\fCenter\Gamma_1, A, B$
\Axiom$\fCenter\Gamma_2, C, D$
\doubleLine
\BinaryInf$ \fCenter\Gamma_1, \Gamma_2, (A\pa B)\tensor(C \pa D)\oplus(A\pa C)\tensor (B\pa D)$
\Axiom$\fCenter\Delta_1, \oldneg A, \oldneg D$
\Axiom$\fCenter\Delta_2, \oldneg B$
\Axiom$\fCenter\Delta_3, \oldneg C$
\doubleLine
\TrinaryInf$ \fCenter\Delta_1,\Delta_2,\Delta_3,((\oldneg A\pa\oldneg D)\tensor\oldneg B\tensor\oldneg C)\oplus ((\oldneg B\pa\oldneg C)\tensor \oldneg A\tensor\oldneg D)$
\BinaryInf$ \fCenter\Gamma_1, \Gamma_2, \Delta_1,\Delta_2,\Delta_3$
\DisplayProof}
\end{center}

\begin{center}
\scalebox{0.7}{
\def\fCenter{\ \vdash\ }
\Axiom$\fCenter\Gamma_1, A, B$
\Axiom$\fCenter\Delta_1, \oldneg A,\oldneg D$
\BinaryInf$ \fCenter\Gamma_1,  \Delta_1, B, \oldneg D$
\Axiom$\fCenter\Delta_2, \oldneg B$
\BinaryInf$ \fCenter\Gamma_1,  \Delta_1, \Delta_2, \oldneg D$
\Axiom$\fCenter\Gamma_2, C, D$
\BinaryInf$ \fCenter\Gamma_1, \Gamma_2, \Delta_1,\Delta_2, C$
\Axiom$\fCenter\Delta_3, \oldneg C$
\BinaryInf$ \fCenter\Gamma_1, \Gamma_2, \Delta_1,\Delta_2, \Delta_3$
\DisplayProof}
\end{center}
\caption{Undefined cut between $\oplus$ and $\oplus$}
\label{fig:undefined-cut}
\end{framed}
\end{figure}



\section{Non-decomposability preservation result}

Let $\mathsf{EMLL}$ denote the multiplicative-exponential fragment of Elementary Linear Logic. In this section, we show that  $\mathsf{EMLL}$ preserves non-decomposability. Namely, if a generalized multiplicative connective $\mathcal{C}$ is non-decomposable in $\mathsf{MLL}$, then $\mathcal{C}$ remains non-decomposable in $\mathsf{EMLL}$.

The multiplicative-exponential fragment of Elementary Linear Logic $\mathsf{EMLL}$ is defined as follows.
\begin{defn}
The formulas of $\mathsf{EMLL}$ are given by:

$$A:= P | \oldneg P | A\tensor A | A\pa A|!A|?A$$

Negation is inductively defined  as follows;

$$\oldneg\oldneg P:=P, \oldneg(A\tensor B):=\oldneg A\pa\oldneg B, \oldneg(A\pa B):=\oldneg A\tensor\oldneg B,$$ $$ \oldneg(!A):=?\oldneg A, \oldneg(?A)=!\oldneg A$$
(where $P$ is an atomic formula）

\end{defn}

The inference rules of $\mathsf{EMLL}$ are given in Figure \ref{EMLL rules}.

\begin{figure}[h]
\begin{framed}
\begin{center}
\AxiomC{}
\RightLabel{(Ax)}
\UnaryInfC{$\ \vdash\ P, \oldneg P$}
\DisplayProof
\medskip
\def\fCenter{\ \vdash\ }
\Axiom$\fCenter \Gamma,  A$
\Axiom$\fCenter\Delta, \oldneg A$
\RightLabel{(Cut)}
\BinaryInf$\fCenter\Gamma, \Delta$
\DisplayProof
\end{center}

\begin{center}
\def\fCenter{\ \vdash\ }
\Axiom$\fCenter \Gamma,  A$
\Axiom$\fCenter\Delta, B$
\RightLabel{(\tensor)}
\BinaryInf$\fCenter\Gamma, \Delta, A\tensor B$
\DisplayProof
\medskip
\Axiom$\fCenter \Gamma,  A, B$
\RightLabel{($\pa$)}
\UnaryInf$\fCenter\Gamma, A\pa B$
\DisplayProof
\medskip
\Axiom$\fCenter \Gamma$
\RightLabel{(?-weakening)}
\UnaryInf$\fCenter\Gamma, ?A$
\DisplayProof
\medskip
\Axiom$\fCenter \Gamma, ?A, ?A$
\RightLabel{(?-contraction)}
\UnaryInf$\fCenter\Gamma, ?A$
\DisplayProof
\medskip
\Axiom$\fCenter\Gamma, A$
\RightLabel{($K$)}
\UnaryInf$\fCenter?\Gamma, !A$
\DisplayProof
\end{center}
\caption{Inference rules of $\mathsf{EMLL}$ }
\label{EMLL rules}
\end{framed}
\end{figure}

The following is the general form of an introduction rule for a generalized multiplicative connective. Note that each premise of a generalized connective contains at least one principal formula.

\begin{center} 
\def\fCenter{\ \vdash\ }
\Axiom$\fCenter \Gamma_1,  A_{11}, \dots, A_{1i_1} \ \ \cdots$
\Axiom$\fCenter\Gamma_m, A_{m1},\dots,A_{mi_m}$
\BinaryInf$\fCenter\Gamma_1,\dots, \Gamma_m, \mathcal{C}(A_1,\dots,A_n)$
\DisplayProof
\end{center}
where $ji_j\in\{1,\dots,n\}$ and $i_k\neq 0 \  (1\leq k\leq m)$.

\begin{thm}\label{EMLL nondecomp}
Let $\mathcal{C}$ be an arbitrary $\mathsf{MLL}$-non-decomposable connective.  Then  $\mathcal{C}$ is $\mathsf{EMLL}$-non-decomposable.
\end{thm}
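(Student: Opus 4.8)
The plan is to argue by contradiction and to reduce any genuine $\mathsf{EMLL}$-decomposition to an $\mathsf{MLL}$-decomposition. Suppose $\mathcal{C}$ were $\mathsf{EMLL}$-decomposable. Since $\mathcal{C}$ is $\mathsf{MLL}$-non-decomposable, clause~1 of Definition~\ref{L-decomp} cannot hold, so clause~2 must: there would be a formula $\alpha\in\mathsf{EMLL}$ such that, for every introduction rule $I_i$, the sequent $\vdash\alpha$ is derivable from the premises $S_{i1},\dots,S_{im_i}$, and those premises are recoverable from $\vdash\alpha$ by bottom-up construction, each time using at least one exponential rule ($?$-weakening, $?$-contraction, or $K$). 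The whole point is then to show that, once $\mathcal{C}$ is multiplicative, no such $\alpha$ can exist.

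First I would isolate the two rigidity features of $\mathsf{EMLL}$ that drive the argument. (a)~The formula grammar of $\mathsf{EMLL}$ contains no multiplicative units, so every subformula of a decomposing $\alpha$ carries at least one occurrence of an input atom $A_j$. (b)~There is no dereliction rule: the only rules mentioning $?$ are $?$-weakening, which \emph{creates} an occurrence, $?$-contraction, which \emph{merges} two occurrences, and $K$, which produces an entire $?$-context; consequently a subformula $?B$ can never be converted back into $B$, and $K$ can introduce a $!$ only when every accompanying formula is already of the shape $?(-)$.

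The core of the proof would be a linearity/support count on occurrences of the input atoms. Because $\mathcal{C}$ is multiplicative, each $A_j$ occurs exactly once in $\mathcal{C}(A_1,\dots,A_n)$ and exactly once across the premises of every introduction rule, so $\alpha$ must present each $A_j$ with exactly one occurrence. I would then show that $\alpha$ can contain no exponential connective. If it contained a subformula $?B$, then by~(a) $B$ contains some $A_j$; in the forward derivation that $?B$ must be produced by $?$-weakening, forcing $A_j$ to occur zero times in the premises, or by $?$-contraction, forcing it to occur at least twice, so the exactly-once balance is destroyed. The $!$ case reduces to this one, since by~(b) forming $!B$ via $K$ requires a surrounding $?$-context, which by~(a) carries atoms. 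The same count, read bottom-up, rules out clause~2(2): recovering the exponential-free premises from a sequent still carrying $?B$ would have to delete or duplicate $A_j$, there being no dereliction to strip the $?$ linearly. Hence $\alpha$ is exponential-free, i.e.\ $\alpha\in\mathsf{MLL}$.

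But an exponential-free $\alpha$ collapses the situation back to clause~1: the two derivations then relate the multiplicative premises of $\mathcal{C}$ to an $\mathsf{MLL}$-formula, whence $P_{\mathcal{C}}=P_{\alpha}$ and $\mathcal{C}$ is $\mathsf{MLL}$-decomposable, contradicting the hypothesis; moreover no non-$\mathsf{MLL}$ rule can genuinely appear in such derivations (any $?$ created by weakening could never be removed on the way to an exponential-free $\alpha$), so clause~2's side condition also fails. Either way the assumption is refuted and $\mathcal{C}$ is $\mathsf{EMLL}$-non-decomposable. I expect the main obstacle to be making this support count airtight in the presence of \emph{cut} and of nested boxes in the clause~2 derivations: one must check that a cut cannot silently launder a $?B$ away---it can only transfer it to the dual $!$ side, where the context constraint of $K$ reinstates the same obstruction---and that nesting of $K$ never manufactures a subformula free of input atoms. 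Those are exactly the points where features~(a) and~(b) have to be pushed hardest.
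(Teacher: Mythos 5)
Your overall strategy (assume $\mathsf{EMLL}$-decomposability, then argue the decomposing formula $\alpha$ cannot really contain an exponential) starts out parallel to the paper's, and your disposal of $!$ via the $?$-context constraint of the $K$-rule matches the paper's observation that $K$ would force the context $\Gamma$ to be of the form $?\Delta$, against its arbitrariness. But the core of your argument rests on claim~(a), and (a) is false: ``no multiplicative units, hence every subformula contains an atom'' does not give ``every subformula contains an occurrence of one of the inputs $A_1,\dots,A_n$''. The $?$-weakening rule introduces $?B$ for an \emph{arbitrary} $B$, in particular $?P$ for a fresh atom $P$ unrelated to the inputs. For such a subformula your linearity count on the $A_j$ is simply not disturbed --- weakening it in neither deletes nor duplicates any $A_j$ --- so you cannot conclude that $\alpha$ is exponential-free. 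Formulas such as $\beta\pa{?P}$ or $\beta\tensor{?P}$ with $\beta\in\mathsf{MLL}$ are genuine $\mathsf{EMLL}$ candidates that survive your count, and they are exactly the case this theorem is really about.

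The paper closes that case by a syntactic case split on where the weakened $?A$'s sit inside $\alpha$: if some $?A$ is an operand of a tensor, then in the bottom-up reconstruction one premise of the induced introduction rule degenerates to the empty sequent $\vdash$, contradicting the requirement that every premise of a generalized-connective rule contains at least one principal formula; if every such $?A$ sits only under pars, one deletes the $?A$'s together with the superfluous pars and obtains an $\mathsf{MLL}$-formula $\alpha'$ with $P_{\alpha'}=P_{\mathcal{C}}$, contradicting $\mathsf{MLL}$-non-decomposability. Neither step appears in your proposal, and without them the argument does not go through: your count would ``prove'' that no exponential can occur in $\alpha$ at all, which is not true. (Your count is still relevant to the subcase where a $?B$ does contain some input $A_j$, and your worries about cut and nested $K$-boxes are legitimate points the paper passes over silently, but they do not repair the main gap.)
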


\begin{proof}

Assume, for contradiction, that $\mathcal{C}$ is $\mathsf{EMLL}$-decomposable. By definition, there is a $\mathsf{EMLL}$-formula  $\alpha$ such that $\vdash \Gamma, \alpha$ (where $\Gamma=\bigcup^{m}_{s=1}\Gamma_s$ and  ``$\cup$" is the multiset union) is derivable from the same assumptions for $\mathcal{C}$ in $\mathsf{EMLL}$. The formula $\alpha$ must contain at least one modal operator. Otherwise, it contradicts  the assumption that $\mathcal{C}$ is $\mathsf{MLL}$-non-decomposable.
The  modal operator contained in $\alpha$ cannot be a bang connective (!). In that case,  $\Gamma$ must have the form $?\Delta$. It contradicts the assumption that $\Gamma$ is arbitrary. If $\alpha$ contains a why-not connective (?), a why-not formula can be introduced only by the $K$-rule or by $?$-weakening. It cannot be the $K$-rule by the argument above. Hence, some formula $?A$ is introduced by the weakening rule. 
%
If an operand of a tensor in the formula $\alpha$ has the form $?A$, then there is some decomposition of the formula $\alpha$  such that some sequent has only one formula $(\cdots \vdash ?A \cdots)$. We apply $?$-weakening rule to this sequent and obtain the empty sequent $\vdash$. This contradicts the definition of $\mathcal{C}$-introduction rule. Hence no occurrence of $?A$ can appear as an operand of any $\tensor$ in $\alpha$.  If any operand of every tensor connective in $\alpha$ has the form $?A$,
let $\alpha'$ be obtained from $\alpha$ by deleting all occurrences of $?A$ and the superfluous $\pa$-connectives. If $\alpha'$ still satisfied Definition~\ref{L-decomp}(2)(i)(ii), then $\alpha'$ would be a strictly simpler candidate decomposition,
contradicting the minimality of $\alpha$. Hence $\alpha'$ cannot satisfy (i)(ii). Nevertheless, by construction $\alpha'$ is still derivable from each premise of $\mathcal{C}$ using only $\mathsf{MLL}$-rules, yielding an $\mathsf{MLL}$-decomposition of $\mathcal{C}$, a contradiction.

\end{proof}

%



Preservation of non-decomposability in multiplicative light linear logic $\mathsf{MLLL}$ can be proved by an argument similar to the one given above.


\section{Conclusion}

In this note we investigated Danos and Regnier’s notion of (non-)decomposability for generalized multiplicative connectives beyond the classical multiplicative setting. First, by introducing \emph{intuitionistic} generalized multiplicative connectives via polarized partitions, we showed that every such connective in $\mathsf{IMLL}$ is decomposable; hence non-decomposable generalized multiplicative connectives do not arise in the intuitionistic multiplicative fragment. Secondly, we extended the definition of decomposability to larger systems and proved that any $\mathsf{MLL}$-non-decomposable generalized connective becomes decomposable in $\mathsf{MALL}$ (and similarly in $\mathsf{MELL}$), essentially because additive (or exponential) structure allows one to assemble the set of $\mathsf{MLL}$-normal forms associated with the partition set. Finally, we showed that this collapse does not occur in $\mathsf{EMLL}$: $\mathsf{MLL}$-non-decomposable generalized connectives remain non-decomposable in the multiplicative fragment of Elementary Linear Logic. These results clarify that non-decomposability is sensitive both to the classical/intuitionistic polarity discipline and to the availability of additive or exponential resources, and they isolate $\mathsf{EMLL}$ as a natural extension of $\mathsf{MLL}$ in which non-decomposability is preserved.



\end{document}